\shorttitle{Stable laws and Beurling kernels}
\newcommand{\oh}{\mathrm o}
\newcommand{\Oh}{\mathrm O}
\providecommand{\abs}[1]{\lvert #1\rvert }
\newtheorem{ThmGFE}{\noindent Theorem GFE}
\begin{document}

\title{Stable laws and Beurling kernels}

\author[London School of Economics]{Adam J. Ostaszewski}
\address{Mathematics Department, London School of Economics,
 Houghton Street, London WC2A 2AE, UK}
\email{A.J.Ostaszewski@lse.ac.uk}

\begin{abstract}We identify a close relation between stable
distributions and the limiting homomorphisms central to the theory of
regular variation. In so doing some simplifications are achieved in the
direct analysis of these laws in Pitman and Pitman (2016); stable
distributions are themselves linked to homomorphy.
\end{abstract}

\keywords{Stable laws; Beurling regular variation;
quantifier weakening; homomorphism; Goldie equation;
\foreignlanguage{polish}{Go\l\aob b--Schinzel} equation; Levi--Civita equation}

\ams{60E07}{26A03; 39B22; 34D05; 39A20}

\renewcommand{\thefootnote}{\number \count63}
\setcounter{footnote}{0}%

\section{Introduction}\label{s:intro}

This note\footnote{This expanded version of \cite{OstA} includes new material in \S 4 and an Appendix.} takes its inspiration from Pitman and Pitman's approach \cite{PitP},
in this volume, to the characterization of stable laws \emph{directly} from
their characteristic functional equation \cite[(2.2)]{PitP}, \eqref{ChFE}
below, which they complement with the derivation of parameter restrictions by
an appeal to \emph{Karamata} (classical) regular variation (rather than
\emph{indirectly} as a special case of the L\'{e}vy--Khintchine
characterization of infinitely decomposable laws---cf.\ \cite[Section
  4]{PitP}). We take up their functional-equation tactic with three aims in
mind.  The first and primary one is to extract a hidden connection with the
more general theory of \emph{Beurling regular variation}, which embraces the
original Karamata theory and its later `Bojani\'{c}--Karamata--de Haan'
variants. (This has received renewed attention: \cite{BinO1,BinO4,Ost1}).  The
connection is made via another functional equation, the \emph{Goldie equation}
\begin{equation}\label{GFE}
\kappa(x+y)-\kappa(x)=\gamma(x)\kappa(y)\qquad(x,y\in\mathbb{R}),\tag{\emph{GFE}}
\end{equation}
with \emph{vanishing side condition} $\kappa(0)=0$ and \emph{auxiliary
function }$\gamma$, or more properly with its multiplicative variant:
\begin{equation}\label{GFEx}
K(st)-K(s)=G(s)K(t)\qquad (s,t\in \mathbb{R}_+:=(0,\infty)),\tag{${\it GFE}_{\times}$}
\end{equation}
with corresponding side condition $K(1)=0$; the additive variant arises first
in \cite{BinG} (see also \cite[Lemma 3.2.1 and Theorem 3.2.5]{BinGT}), but has
only latterly been so named in recognition of its key role both there and in
the recent developments \cite{BinO2,BinO3}, inspired both by \emph{Beurling
slow variation} \cite[Section 2.11]{BinGT} and by its generalizations
\cite{BinO1,BinO4} and \cite{Ost1}. This equation describes the family of
\emph{Beurling kernels} (the asymptotic homomorphisms of Beurling regular
variation), that is, the functions $K_{F}$ arising as locally uniform limits
of the form
\begin{equation}\label{BKer}
K_{F}(t):=\lim_{x\rightarrow \infty }[F(x+t\varphi (x))-F(x)],
\tag{\emph{BKer}}
\end{equation}
for $\varphi(\cdot)$ ranging over \emph{self-neglecting} functions (\emph{SN}).
(See \cite{Ost1,Ost2} for the larger family of kernels arising when $\varphi(\cdot)$
ranges over the \emph{self-equivarying} functions (\emph{SE}), both classes
recalled in the complements section \ref{ss:SNSE}.)

A secondary aim is achieved in the omission of extensive special-case
arguments for the limiting cases in the Pitman analysis (especially the case
of characteristic exponent $\alpha =1$ in \cite[Section 5.2]{PitP}---affecting
parts of \cite[Section 8]{PitP}), employing here instead the more natural approach of
  interpreting the `generic' case `in the limit' via the l'Hospital rule. A
  final general objective of further streamlining is achieved, \emph{en
    passant}, by telescoping various cases into one, simple, group-theoretic
  argument; this helps clarify the `group' aspects as distinct from
  `asymptotics', which relate parameter restrictions to tail balance---see
  Remark \ref{r:dominant}.

A random variable $X$ has a \emph{stable law} if for each $n\in \mathbb{N}$
the law of the random walk $S_{n}:=X_{1}+\dotsb+X_{n},$ where the $n$ steps are
independent and with law identical to $X$, is of the same type, i.e.\ the
same in distribution up to scale and location:
\[
S_{n}\eqdist a_{n}X+b_{n},
\]
for some real constants $a_{n}$, $b_{n}$ with $a_{n}>0$; cf.\ \cite[VI.1]{Fel} and
\cite[(1.1)]{PitP}. These laws may be characterized by the \emph{characteristic
functional equation} (of the characteristic function of $X$, $\varphi (t)=
\mathbb{E}[\re^{\ri tX}]$), as in \cite[(2.2)]{PitP}:
\begin{equation}\label{ChFE}
\varphi(t)^n=\varphi(a_nt)\exp(\ri b_nt)\qquad(n\in\mathbb{N},\;
  t\in\mathbb{R}_+).\tag{\emph{ChFE}}
\end{equation}

The standard way of solving \eqref{ChFE} is to deduce the equations satisfied by
the functions $a:n\mapsto a_{n}$ and $b:n\mapsto b_{n}$. Pitman and Pitman
\cite{PitP} proceed directly by proving the map $a$ \emph{injective}, then
extending the map $b$ to $\mathbb{R}_{+}:=(0,\infty )$, and exploiting the
classical Cauchy (or Hamel) exponential functional equation (for which see
\cite{AczD} and \cite{Kuc}):
\begin{equation}\label{CEE}
K(xy)=K(x)K(y)\qquad (x,y\in \mathbb{R}_{+});\tag{\emph{CEE}}
\end{equation}
\eqref{CEE} is satisfied by $K(\cdot)=a(\cdot)$ on the smaller domain
$\mathbb{N}$, as a consequence of \eqref{ChFE}. See \cite{RamL} for a similar,
but less self-contained account. For other applications see the recent
\cite{GupJTS}, which characterizes `generalized stable laws'.

We show in Section \ref{s:reduction} the surprising equivalence of
\eqref{ChFE} with the fundamental equation \eqref{GFE} of the recently
established theory of \emph{Beurling regular variation}. There is thus a
one-to-one relation between Beurling kernels arising through \eqref{BKer} and
the continuous solutions of \eqref{ChFE}, amongst which are the one-dimensional stable
distributions. This involves passage from discrete to continuous, a normal
feature of the theory of regular variation (see \cite[Section 1.9]{BinGT})
which, rather than unquestioningly adopt, we track carefully via Lemma 1 and
Corollary 1 of Section \ref{s:reduction}: the ultimate justification here is the
extension of $a$ to $\mathbb{R}_{+}$ (Ger's extension theorem \cite[Section
  18.7]{Kuc} being thematic here), and the continuity of characteristic
functions.

The emergence of a particular kind of functional equation, one interpretable
as a \emph{group} homomorphism (see Section \ref{ss:Homo}), is linked to the
simpler than usual form here of `probabilistic associativity' (as in
\cite{Bin}) in the incrementation process of the stable random walk; in more
general walks, functional equations (and integrated functional equations---see
\cite{RamL}) arise over an associated \emph{hypergroup}, as with the
Kingman--Bessel hypergroup and Bingham-Gegenbauer (ultraspherical) hypergroup (see
\cite{Bin} and \cite{BloH}). We return to these matters, and connections with
the theory of flows, elsewhere---\cite{Ost3}.

The material is organized as follows. Below we identify the solutions to
\eqref{GFE} and in Section \ref{s:reduction} we prove equivalence of
\eqref{GFE} and \eqref{ChFE}; our proof is self-contained modulo the
(elementary) result that, for $\varphi $ a characteristic function,
\eqref{ChFE} implies $a_{n}=n^{k}$ for some $k>0$ (in fact we need only to know that $k\neq 0$). Then in Section \ref{s:form} we read off the form of the characteristic functions of the
stable laws. In  Section \ref{s:sequenceidentification} we show that, for an arbitrary continuous solution $\varphi$ of \eqref{ChFE}, necessarily $a_{n}=n^{k}$ for some $k\neq 0$.
We conclude in Section \ref{s:complements} with complements
describing the families \emph{SN} and \emph{SE} mentioned above, and
identifying the group structure implied, or `encoded', by \eqref{GFEx} to be
$(\mathbb{R}_{+},\times )$, the multiplicative positive reals. In the Appendix we offer an elementary derivation of a a key formula needed in \cite{PitP}.

The following result, which has antecedents in several settings (some cited
below), is key; on account of its significance, this has recently received
further attention in \cite[especially Theorem 3]{BinO2} and \cite[especially
Theorem 1]{Ost2}, to which we refer for background---cf.\ Section
\ref{ss:ThmGFE}.

{%
\makeatletter
\def\th@plain{\normalfont\itshape
  \def\@begintheorem##1##2{%
        \item[\hskip\labelsep \theorem@headerfont ##1{\bf .}] }%
}%
\makeatother

\begin{ThmGFE} {\rm (\cite[Theorem 1]{BinO2}, \cite[(2.2)]{BojK},
  \cite[Lemma 3.2.1]{BinGT}; cf.\ \cite{AczG}.)}
For $\mathbb{C}$-valued functions $\kappa$ and $\gamma$ with $\gamma$ locally
bounded at $0$, with $\gamma(0)=1$ and $\gamma\neq1$
except at $0$, if $\kappa\not\equiv0$ satisfies
\eqref{GFE} subject to the side condition $\kappa(0)=0$, then for some
$\gamma_0$, $\kappa_0\in \mathbb{C}$:
\[
\gamma(u)=\re^{\gamma_0u}\quad\text{and}\quad
\kappa(x)\equiv\kappa_0H_{\gamma_0}(x):=\kappa_0\int_0^x\gamma(u)\sd u
=\kappa_0\frac{\re^{\gamma_0x}-1}{\gamma_0},
\]
under the usual l'Hospital convention for interpreting $\gamma_0=0$.
\end{ThmGFE}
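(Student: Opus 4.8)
The plan is to exploit the commutativity of addition on the left of \eqref{GFE} in order to \emph{separate the variables}, reducing this two-unknown equation to a single Cauchy exponential equation for $\gamma$ alone. Writing \eqref{GFE} once as it stands and once with $x$ and $y$ interchanged and subtracting, the term $\kappa(x+y)$ cancels and one is left with
\[
\kappa(y)-\kappa(x)=\gamma(x)\kappa(y)-\gamma(y)\kappa(x),
\]
which rearranges to $\kappa(y)\,(1-\gamma(x))=\kappa(x)\,(1-\gamma(y))$. This is the crux: it asserts that $\kappa$ and $1-\gamma$ are everywhere proportional, and it is the step I regard as the key idea.

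First I would make the proportionality precise. By hypothesis $\gamma(x)\neq1$ for every $x\neq0$, so $1-\gamma(x)\neq0$ off the origin and the quotient $\kappa(x)/(1-\gamma(x))$ equals a single constant $-A$ independent of $x\neq0$; thus $\kappa(x)=A(\gamma(x)-1)$ for all $x\neq0$, and this persists at $x=0$, where both sides vanish by $\kappa(0)=0$ and $\gamma(0)=1$. The side condition $\kappa\not\equiv0$ then forces $A\neq0$ (equivalently $\gamma\not\equiv1$). Next I would substitute $\kappa=A(\gamma-1)$ back into \eqref{GFE}; after cancelling the common factor $A\neq0$ the equation collapses to
\[
\gamma(x+y)=\gamma(x)\gamma(y)\qquad(x,y\in\mathbb{R}),
\]
the multiplicative Cauchy (exponential) equation, with $\gamma$ non-vanishing since otherwise $\gamma\equiv0$ against $\gamma(0)=1$. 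Invoking the classical theory, a solution of this equation that is locally bounded at $0$ and satisfies $\gamma(0)=1$ is necessarily of the form $\gamma(u)=e^{\gamma_0 u}$ for some $\gamma_0\in\mathbb{C}$ \cite{Kuc,BinGT,AczG}. Feeding this into $\kappa=A(\gamma-1)$ and relabelling $\kappa_0:=A\gamma_0$ yields $\kappa(x)=\kappa_0(e^{\gamma_0 x}-1)/\gamma_0=\kappa_0 H_{\gamma_0}(x)$, as required.

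The step I expect to be the main obstacle is the last one, the passage from $\gamma(x+y)=\gamma(x)\gamma(y)$ to the closed form $e^{\gamma_0 u}$. Local boundedness at $0$ controls directly only the modulus, giving $|\gamma(u)|=e^{\mathrm{Re}(\gamma_0)u}$ from the real-valued multiplicative equation for $|\gamma|$; pinning down the unimodular (argument) part is exactly the delicate regularity content of the Cauchy theory, which is why an appeal to \cite{Kuc,BinGT} rather than a one-line computation is needed.

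Finally I would record a compatibility remark. The hypothesis $\gamma\neq1$ off $0$ rules out both $\gamma_0=0$ (which would give $\gamma\equiv1$) and $\gamma_0$ purely imaginary and non-zero (for which $e^{\gamma_0 u}=1$ already at $u=2\pi/\mathrm{Im}(\gamma_0)\neq0$), so in fact $\mathrm{Re}\,\gamma_0\neq0$ here. Consequently the l'Hospital reading $\lim_{\gamma_0\to0}(e^{\gamma_0 x}-1)/\gamma_0=x$ is not actually invoked under these hypotheses; it is retained in the statement only so that the displayed formula for $\kappa$ continues to make sense in the degenerate companion cases.
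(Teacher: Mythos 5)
Your proposal is correct and follows essentially the same route as the paper's own argument (sketched in Remark~1): interchanging $x$ and $y$ in \eqref{GFE} to get $\kappa(u)[\gamma(v)-1]=\kappa(v)[\gamma(u)-1]$, hence $\kappa=A(\gamma-1)$ with $A$ constant, then observing that $\gamma$ must satisfy \eqref{CEE} and appealing to the classical regularity theory for the exponential Cauchy equation. Your explicit flagging of the delicate point --- that local boundedness controls only $\abs{\gamma}$ and that the unimodular part is where the cited Cauchy-equation machinery is genuinely needed --- is a fair and accurate account of what the paper likewise delegates to its references.
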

}

\begin{rem}\label{r:extend}
The cited proof is ostensibly for
$\mathbb{R}$-valued $\kappa(\cdot)$ but immediately extends to
$\mathbb{C}$-valued $\kappa$. Indeed, in brief, the proof rests on symmetry:
\[
\gamma (v)\kappa(u)+\kappa(v)=\kappa(u+v)=\kappa(v+u)
=\gamma(u)\kappa(v)+\kappa(u).
\]
So, for $u$, $v$ not in $\{x:\gamma(x)=1\}$, an additive subgroup,
\[
\kappa(u)[\gamma(v)-1]=\kappa(v)[\gamma(u)-1]:\qquad
\frac{\kappa(u)}{\gamma(u)-1}=\frac{\kappa(v)}{\gamma(v)-1}
=\kappa_0,
\]
as in \cite[Lemma 3.2.1]{BinGT}. If $\kappa(\cdot)$ is to satisfy \eqref{GFE},
$\gamma(\cdot)$ needs to satisfy \eqref{CEE}.

The notation $H_\rho$ (originating in \cite{BojK}) is from \cite[Chapter 3: de
  Haan theory]{BinGT} and, modulo exponentiation, links to the `inverse'
functions $\eta_\rho(t)=1+\rho t$ (see Section \ref{ss:Homo}) which permeate
regular variation (albeit long undetected), a testament to the underlying
\emph{flow} and \emph{group} structure, for which see especially
\cite{BinO1,BinO4}.

The Goldie equation is a special case of the \emph{Levi--Civita
equations}; for a text-book treatment of their solutions for domain a
semigroup and range $\mathbb{C}$ see \cite[Chapter 5]{Ste}.
\end{rem}

\begin{rem}\label{r:constants}
We denote the constants $\gamma_0$ and $\kappa_0$ more
simply by $\gamma$ and $\kappa$, whenever context permits. To prevent
conflict with the $\gamma$ of \cite[Section 5.1]{PitP} we denote that
here by $\gamma_{\text{P}}(k),$ showing also dependence on the index of
growth of $a_n$: see Section \ref{ss:notation}.
\end{rem}

\begin{rem}\label{r:stuv}
To solve \eqref{GFEx} write $s=\re^u$ and $t=\re^v$,
obtaining \eqref{GFE}; then
\begin{align*}
G(\re^u)&=\gamma(u)=\re^{\gamma u}:\qquad G(s)=s^{\gamma}\\
K(\re^u)&=\kappa(u)=\kappa\,\frac{\re^{\gamma u}-1}\gamma:\qquad
  K(s)=\kappa\,\frac{s^\gamma-1}\gamma.
\end{align*}
\end{rem}

\begin{rem}\label{r:altreg}
Alternative regularity conditions, yielding
continuity and the same $H_\gamma$ conclusion, include in \cite[Theorem
  2]{BinO2} the case of $\mathbb{R}$-valued functions with $\kappa(\cdot)$ and
$\gamma(\cdot)$ both non-negative on $\mathbb{R}_+$ with $\gamma\neq1$ except
at $0$ (as then either $\kappa\equiv0$, or both are continuous).
\end{rem}

\section{Reduction to the Goldie Equation}\label{s:reduction}

In this section we establish a Proposition connecting \eqref{ChFE} with
\eqref{GFEx}, and so stable laws with Beurling kernels. Here in the interests of
brevity\footnote{In \S 4 we prove from \eqref{ChFE}, with $\varphi$ arbitrary
but continuous, that $a_n=n^k$ for some $k\ne0$, cf. \cite{Ost3}.}, this
makes use of a well-known result concerning the norming constants
(cf.\ \cite[VI.1, Theorem 1]{Fel}, \cite[Lemma 5.3]{PitP}), that $a:n\mapsto
a_n$ satisfies $a_n=n^k$ for some $k>0$, and so is extendible to a continuous
surjection onto $\mathbb{R}_+:=(0,\infty)$:
\[
\tilde a(\nu)=\nu^k\qquad(\nu>0);
\]
this is used below to justify the validity of the definition
\[
f(t):=\log\varphi(t)\qquad(t>0),
\]
with $\log$ here the principal logarithm, a tacit step in \cite[Section 5.1]{PitP},
albeit based on \cite[Lemma 5.2]{PitP}. We write
$a_{m/n}=\tilde a_{m/n}=a_m/a_n$ and put
$\mathbb{A}_{\mathbb{N}}:=\{a_n:n\in\mathbb{N}\}$ and
$\mathbb{A}_{\mathbb{Q}}:=\{a_{m/n}:m,n\in \mathbb{N}\}$.

The Lemma below re-proves an assertion from \cite[Lemma 5.2]{PitP}, but without
assuming that $\varphi$ is a characteristic function. Its Corollary needs
no explicit formula for $b_{m/n},$ since the term will eventually be
eliminated.

{%

\begin{lemma}\label{l}
For continuous $\varphi\not\equiv0$ satisfying \eqref{ChFE} with $a_n=n^k$
($k\ne0$), $\varphi$ has no zeros on $\mathbb{R}_+$.
\end{lemma}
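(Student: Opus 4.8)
The plan is to study the zero set $Z:=\{t>0:\varphi(t)=0\}$ and to show that it is either empty or dense in $\mathbb{R}_+$, the latter being incompatible with continuity together with $\varphi\not\equiv0$. The starting observation is that the exponential factor in \eqref{ChFE} never vanishes: since $\exp(\ri b_nt)\neq0$, the relation $\varphi(t)^n=\varphi(n^kt)\exp(\ri b_nt)$ gives, for each fixed $n$,
\[
\varphi(t)=0\iff\varphi(n^kt)=0,
\]
so that multiplication by $a_n=n^k$ maps $Z$ bijectively onto itself, as does multiplication by $n^{-k}$ (the inverse map).

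First I would note that these maps generate, under composition, multiplication by the whole multiplicative group $\mathbb{A}_{\mathbb{Q}}=\{(m/n)^k:m,n\in\mathbb{N}\}$: indeed $a_m/a_n=(m/n)^k$, products and inverses of the $n^{\pm k}$ remain in $\mathbb{A}_{\mathbb{Q}}$, and $1\in\mathbb{A}_{\mathbb{Q}}$. Consequently $Z$ is invariant under multiplication by every element of $\mathbb{A}_{\mathbb{Q}}$; that is, $t\in Z$ forces $t\cdot\mathbb{A}_{\mathbb{Q}}\subseteq Z$.

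The crucial point is the density of $\mathbb{A}_{\mathbb{Q}}$ in $\mathbb{R}_+$. Because $k\neq0$, the map $x\mapsto x^k$ is a homeomorphism of $(\mathbb{R}_+,\times)$ onto itself, and it carries the dense subset $\{m/n:m,n\in\mathbb{N}\}$ onto $\mathbb{A}_{\mathbb{Q}}$; hence $\mathbb{A}_{\mathbb{Q}}$ is dense. (This is precisely where $k\neq0$, rather than the stronger $k>0$, is used.) Thus if $Z\neq\emptyset$, choosing any $t_0\in Z$ makes $t_0\cdot\mathbb{A}_{\mathbb{Q}}$ a dense subset of $\mathbb{R}_+$ on which $\varphi$ vanishes; continuity of $\varphi$ then forces $\varphi\equiv0$ on $\mathbb{R}_+$, contradicting $\varphi\not\equiv0$. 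Therefore $Z=\emptyset$, which is the assertion.

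I expect the only delicate step to be the passage from invariance under the integer multipliers $n^k$ to invariance---and density---under the full rational group $\mathbb{A}_{\mathbb{Q}}$; once this group-and-homeomorphism bookkeeping is in place, the contradiction via continuity is immediate. One should also make explicit that $\not\equiv0$ is read on $\mathbb{R}_+$ (or, for a genuine characteristic function, invoke $\varphi(0)=1$ and continuity at $0$ to exclude $\varphi\equiv0$ on $\mathbb{R}_+$).
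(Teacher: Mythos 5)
Your argument is correct and is essentially the paper's own proof: both propagate a putative zero $\tau$ along the dense orbit $\tau\cdot\{a_m/a_n\}=\tau\cdot\{(m/n)^k\}$ using the non-vanishing of the exponential factor in \eqref{ChFE}, and then invoke continuity to force $\varphi\equiv0$ on $\mathbb{R}_+$, a contradiction. The extra group-theoretic bookkeeping you add (closure of $\mathbb{A}_{\mathbb{Q}}$ under products and inverses) is harmless but not needed, since the single two-step passage $\tau\mapsto a_m\tau\mapsto\tau a_m/a_n$ already reaches every point of the dense set.
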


\begin{proof}If $\varphi(\tau)=0$ for some $\tau>0$ then
$\varphi(a_m\tau)=0$ for all $m$, by \eqref{ChFE}. Again by \eqref{ChFE},
$\abs{\varphi(\tau a_m/a_n)}^n=\abs{\varphi(a_m\tau)}=0$, so $\varphi$ is
zero on the dense subset of points $\tau a_m/a_n$; then, by continuity,
$\varphi\equiv0$ on $\mathbb{R}_+$, a contradiction.
\end{proof}

\begin{corollary}\label{c}
The equation \eqref{ChFE} with continuous $\varphi\not\equiv0$ and $a_n=n^k$ ($k\ne0$) holds on the dense
subgroup $\mathbb{A}_{\mathbb{Q}}$: there are constants
$\{b_{m/n}\}_{m,n\in\mathbb{N}}$ with
$$\varphi(t)^{m/n}=\varphi(a_{m/n}t)\exp(\ri b_{m/n}t)\qquad(t\ge0).$$
\end{corollary}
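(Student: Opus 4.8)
The plan is to start from the discrete relation \eqref{ChFE}, valid for $n\in\mathbb{N}$, and bootstrap it to the rational multiplicative semigroup $\mathbb{A}_{\mathbb{Q}}$ using the fact, just established in Lemma \ref{l}, that $\varphi$ has no zeros on $\mathbb{R}_+$. Since $\varphi$ is continuous, non-vanishing, and $\varphi(0)=1$ (as $\varphi$ is a normalized limit / characteristic-type function), the principal logarithm $f(t):=\log\varphi(t)$ is well-defined and continuous on $\mathbb{R}_+$, which converts the multiplicative \eqref{ChFE} into the additive form $n\,f(t)=f(a_n t)+\ri b_n t$. This additive reformulation is the workhorse: it lets me manipulate the index $n$ freely without worrying about branch ambiguities of fractional powers of $\varphi$.

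First I would fix $m,n\in\mathbb{N}$ and evaluate $f$ along the scaling by $a_{m/n}=a_m/a_n$. Applying the additive identity with index $m$ at the point $t$ gives $m\,f(t)=f(a_m t)+\ri b_m t$, while applying it with index $n$ at the rescaled point $a_{m/n}t$ gives $n\,f(a_{m/n}t)=f(a_n a_{m/n}t)+\ri b_n a_{m/n}t=f(a_m t)+\ri b_n a_{m/n}t$, using $a_n a_{m/n}=a_m$ (which is immediate from $a_\nu=\nu^k$, since $n^k(m/n)^k=m^k$). Subtracting eliminates the common term $f(a_m t)$ and yields $n\,f(a_{m/n}t)=m\,f(t)-\ri b_m t+\ri b_n a_{m/n}t$. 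Dividing by $n$ produces $f(a_{m/n}t)=\tfrac{m}{n}f(t)+\ri\bigl(b_n a_{m/n}-b_m\bigr)t/n$, which is exactly the additive form of the claimed identity once I define $b_{m/n}:=\bigl(b_n a_{m/n}-b_m\bigr)/n$. Exponentiating and recalling $\exp\circ\log=\mathrm{id}$ on the range of $\varphi$ recovers $\varphi(a_{m/n}t)\exp(\ri b_{m/n}t)=\varphi(t)^{m/n}$, where $\varphi(t)^{m/n}:=\exp\bigl(\tfrac{m}{n}f(t)\bigr)$ is the principal-branch fractional power consistent with the logarithm chosen.

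The density of $\mathbb{A}_{\mathbb{Q}}$ in $\mathbb{R}_+$, invoked in the corollary's statement, follows because $\{(m/n)^k:m,n\in\mathbb{N}\}=\{q^k:q\in\mathbb{Q}_+\}$ is dense in $\mathbb{R}_+$ (as $q\mapsto q^k$ is a continuous bijection of $\mathbb{Q}_+$ onto a dense subset, $k\ne0$). The honest technical point to watch is the consistency of fractional powers: I must ensure that $\varphi(t)^{m/n}$ means the principal power $\exp(\tfrac{m}{n}\log\varphi(t))$ throughout, so that the passage from the multiplicative to the additive equation and back is reversible and unambiguous. Because Lemma \ref{l} guarantees $\varphi$ is zero-free and continuous with $\varphi(0)=1$, the branch of $\log\varphi$ is pinned down by continuity from $t=0$, and no monodromy obstruction arises.

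The main obstacle I anticipate is precisely this branch-tracking for fractional powers rather than any algebraic difficulty: the algebra of eliminating $f(a_m t)$ is routine once the logarithm is legitimized, but legitimizing it requires the zero-freeness from Lemma \ref{l} together with continuity to select a single consistent branch. The statement of the corollary sidesteps an explicit formula for $b_{m/n}$ deliberately, and indeed my derivation yields $b_{m/n}=(b_n a_{m/n}-b_m)/n$ as a byproduct without needing to analyze its structure further, since (as the excerpt notes) this constant is eventually eliminated in the reduction to \eqref{GFEx}.
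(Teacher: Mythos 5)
Your argument is correct and essentially the paper's own: both eliminate the common term $\varphi(a_m t)$ (for you, $f(a_m t)$) by combining the index-$m$ and index-$n$ instances of \eqref{ChFE} at suitably scaled arguments, the only difference being that you work additively with the logarithm throughout while the paper extracts principal $n$-th roots of $\varphi$ directly before invoking the index-$m$ relation at $t/a_n$. The one slip is a sign: your own computation yields $b_{m/n}=(b_m-b_na_{m/n})/n$ rather than $(b_na_{m/n}-b_m)/n$, but this is immaterial since only the existence of the constants is asserted (and, as you note, they are later eliminated).
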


\begin{proof}Taking $t/a_n$ for $t$ in \eqref{ChFE} gives
$\varphi(t/a_n)^n=\varphi(t)\exp(\ri b_nt/a_n)$,
so by Lemma 1, using principal values,
$\varphi(t)^{1/n}=\varphi(t/a_n)\exp(-\ri tb_n/(na_n))$, whence
$$\varphi(t)^{m/n}=\varphi\Bigl(\frac t{a_n}\Bigr)^m
  \exp\Bigl(-\frac{\ri tmb_n}{na_n}\Bigr).$$
Replacing $n$ by $m$ in \eqref{ChFE} and then replacing $t$ by
$t/a_n$ gives $\varphi(t/a_n)^m=\varphi(a_mt/a_n)\break\exp(\ri
b_mt/a_n)$. Substituting this into the above and using $a_m/a_n=a_{m/n}$:
$$\varphi(t)^{m/n}=\varphi(a_{m/n}t)\exp\Bigl(\ri t\,\frac{nb_m-mb_n}{na_n}\Bigr).$$
As the left-hand side, and the first term on the right, depend on $m$ and $n$
only through $m/n$, we may rewrite the constant $(nb_m-mb_n)/(na_n)$
as $b_{m/n}$. The result follows.
\end{proof}
\bigskip

Our main result below, on equational equivalence, uses a condition
\eqref{GARplus}
applied to the dense subgroup
$\mathbb{A}=\mathbb{A}_{\mathbb{Q}}$.
This is a
\emph{quantifier weakening} relative
to \eqref{GFE} and is similar to a condition with all variables ranging over
$\mathbb{A}=\mathbb{A}_{\mathbb{Q}}$, denoted (${\it G}_{\mathbb{A}}$) in \cite{BinO2},
to which we refer for background on quantifier weakening. In Proposition 1
below we may also impose just $({\it G}_{\mathbb{A}_{\mathbb{Q}}})$, granted
continuity of $\varphi$.

\begin{proposition}\label{p}
For $\varphi$ continuous and $a_n=n^k$ ($k\ne0$), the functional equation
\eqref{ChFE} is equivalent to
\begin{equation}\label{GARplus}
K(st)-K(s)=K(t)G(s)\qquad(s\in\mathbb{A},\;t\in\mathbb{R}_+),
\tag{${\it G}_{\mathbb{A},\mathbb{R}_+}$}
\end{equation}
for either of $\mathbb{A}=\mathbb{A}_{\mathbb{N}}$ or
$\mathbb{A}=\mathbb{A}_{\mathbb{Q}}$, both with side condition $K(0)=1$ and
with $K$ and $G$ continuous; the latter directly implies \eqref{GFEx}. The
correspondence is given by
\[
K(t)=
\begin{cases}
  \displaystyle \frac{f(t)}{t\mathstrut},&\text{if $f(1)=0$},\\
  \displaystyle \frac{f(t)}{tf(1)}-1,&\text{if $f(1)\neq0$}.
\end{cases}
\]
\end{proposition}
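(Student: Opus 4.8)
The plan is to pass to logarithms and reduce \eqref{ChFE} to a pure Goldie equation by eliminating the affine nuisance term $\ri b_{m/n}t$. Throughout I write $f(t):=\log\varphi(t)$, using the continuous determination of the logarithm available on $(0,\infty)$ by Lemma~1 (which guarantees $\varphi$ is zero-free there) together with the continuity of $\varphi$; any additive $2\pi\ri\mathbb{Z}$-ambiguity is locally constant, hence killed by continuity and absorbed into the constants $b_{m/n}$. Both implications then follow from a single chain of \emph{reversible} manipulations, so I would prove the forward direction explicitly and read the converse off by running the same steps backwards, the density clause for $\mathbb{A}=\mathbb{A}_{\mathbb{Q}}$ being a final continuity argument.

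For the forward direction I would start from Corollary~1, which on the multiplicative subgroup $\mathbb{A}_{\mathbb{Q}}$ gives $\varphi(t)^{m/n}=\varphi(a_{m/n}t)\exp(\ri b_{m/n}t)$; taking logarithms yields
\[
\frac{m}{n}f(t)=f(a_{m/n}t)+\ri b_{m/n}t\qquad(t>0).
\]
The key manoeuvre is to annihilate the unknown $b_{m/n}$ by specialising $t=1$ and subtracting $t$ times the result, producing the homogeneous relation $\frac{m}{n}[f(t)-tf(1)]=f(a_{m/n}t)-tf(a_{m/n})$. Writing $s:=a_{m/n}$ and using $a_n=n^{k}$ (so $a_{m/n}=(m/n)^{k}$ and $m/n=s^{1/k}$), and introducing $g(t):=f(t)-tf(1)$, a short computation collapses this to
\[
s^{1/k}g(t)=g(st)-t\,g(s)\qquad(s\in\mathbb{A},\ t>0),
\]
with $g(1)=0$. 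Dividing by $t$ and setting $\kappa(t):=g(t)/t=f(t)/t-f(1)$ converts the last display into the multiplicative Goldie equation $\kappa(st)-\kappa(s)=s^{1/k-1}\kappa(t)$, i.e.\ \eqref{GARplus} with auxiliary function $G(s)=s^{1/k-1}$ (which indeed satisfies \eqref{CEE}). Finally I would observe that the $K$ of the statement is exactly $\kappa$ when $f(1)=0$ and the scalar multiple $\kappa/f(1)$ when $f(1)\neq0$; since \eqref{GARplus} is homogeneous in its unknown, $K$ satisfies it in either case, both formulas give the side condition ($K(1)=0$) at once, and continuity of $K$ and $G$ is inherited from that of $f=\log\varphi$.

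The converse needs no new idea, since each step above is an equivalence for the fixed $\varphi$. Starting from \eqref{GARplus} one recovers $g(t)=t\kappa(t)$, hence the homogeneous relation, and then reinstates the affine term by \emph{defining} $\ri b_{m/n}:=\frac{m}{n}f(1)-f(a_{m/n})$, the value forced at $t=1$; exponentiating returns \eqref{ChFE} on $\mathbb{A}_{\mathbb{Q}}$, and the restriction $s=a_{n}$ (where $s^{1/k}=n$) recovers \eqref{ChFE} for every $n\in\mathbb{N}$. When $\mathbb{A}=\mathbb{A}_{\mathbb{Q}}$ is dense, fixing $t$ and letting $s$ vary shows that both sides of \eqref{GARplus} agree on a dense set and are continuous in $s$, so the equation extends to all $s\in\mathbb{R}_+$, which is precisely \eqref{GFEx}.

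The main obstacle is the reduction step itself: spotting that subtracting the $t=1$ specialisation homogenises the equation, and that the subsequent division by $t$ is exactly what turns an additively perturbed relation into a clean multiplicative homomorphism. The two-case shape of $K$ is then merely a normalisation handled by homogeneity, and the sole technical nuisance is the consistent continuous choice of $\log\varphi$, which Lemma~1 underwrites.
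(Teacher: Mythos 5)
Your proposal is correct and follows essentially the same route as the paper: pass to $f=\log\varphi$ (justified by Lemma~1), divide by $t$ to form $f(t)/t$, and eliminate the location term by specialising at $t=1$, then extend from the dense subgroup $\mathbb{A}_{\mathbb{Q}}$ to $\mathbb{R}_+$ by continuity; your subtraction of $t$ times the $t=1$ instance is just the paper's substitution of $H(s)=F(s)-F(1)G(s)$ back into $F(st)=F(t)G(s)+H(s)$ performed in one step, and your uniform $\kappa(t)=f(t)/t-f(1)$ with the optional division by $f(1)$ reproduces the paper's two cases by homogeneity. (Only a cosmetic quibble: the passage from $s^{1/k}g(t)=g(st)-tg(s)$ to $\kappa(st)-\kappa(s)=s^{1/k-1}\kappa(t)$ requires dividing by $st$, not by $t$, though your displayed conclusion is the correct one.)
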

}

\begin{proof}By the Lemma, using principal values, \eqref{ChFE} may
be re-written as
\[
\varphi(t)^{n/t}=\varphi(a_nt)^{1/t}\exp(\ri b_n)\qquad
(n\in\mathbb{N},\;t\in \mathbb{R}_+).
\]
From here, on taking principal logarithms and adjusting notation ($f:=\log
\varphi $, $h(n)=-$\textrm{i}$b_{n},$ and $g(n):=a_{n}\in \mathbb{R}_{+}$),
pass first to the form%
\[
\frac{f(g(n)t)}t=\frac{nf(t)}t+h(n)\qquad(n\in\mathbb{N},\;t\in\mathbb{R}_+);
\]
here the last term does not depend on $t$, and is defined for each $n$ so as
to achieve equality. Then, with $s:=g(n)\in\mathbb{R}_{+}$, replacement of
$n$ by $g^{-1}(s)$, valid by injectivity, gives, on cross-multiplying by $t$,
\[
f(st)=g^{-1}(s)f(t)+h(g^{-1}(s))t.
\]
As $s,t\in \mathbb{R}_+$, take $F(t):=f(t)/t$, $G(s):=g^{-1}(s)/s$,
$H(s):=h(g^{-1}(s))/s$; then
\begin{equation}\label{dag}
F(st)=F(t)G(s)+H(s)\qquad (s\in\mathbb{A}_{\mathbb{N}},\;t\in\mathbb{R}_+).
\tag{\dag }
\end{equation}
This equation contains \emph{three} unknown functions: $F$, $G$, $H$ (cf.\ the
Pexider-like formats considered in \cite[Section 4]{BinO2}), but we may reduce
the number of unknown functions to \emph{two} by entirely
eliminating\footnote{This loses the ``affine action'': $K\mapsto
  G(t)K+H(t)$.} $H$. The elimination argument splits according as $F(1)=f(1)$
is zero or not.

\begin{enumerate}[\it C{a}se 1:\/]
\item $f(1)=0$ (i.e.\ $\varphi (1)=1$). Taking $t=1$ in \eqref{dag} yields
  $F(s)=H(s)$, and so \eqref{GARplus} holds for $K=F$, with side condition
  $K(1)=0$ ($=F(1)$).

\item $f(1)\neq 0$. Then, with $\tilde{F}:=F/F(1)$ and
$\tilde{H}:=H/F(1)$ in \eqref{dag},
\[
\tilde{F}(st)=\tilde{F}(t)G(s)+\tilde{H}(s)\qquad
(s\in\mathbb{A},\;t\in\mathbb{R}_+),
\]
and $\tilde{F}(1)=1$. Taking again $t=1$ gives
$\tilde{F}(s)=G(s)+\tilde{H}(s)$. Setting
\begin{equation}\label{dagdag}
K(t):=\tilde{F}(t)-1=\frac{F(t)}{F(1)}-1\tag{\dag\dag }
\end{equation}
(so that $K(1)=0$), and using $\tilde{H}=\tilde{F}-G$ in \eqref{dag} gives
\begin{align*}
\tilde{F}(st)&=\tilde{F}(t)G(s)+\tilde{F}(s)-G(s),\\
(\tilde{F}(st)-1)-(\tilde{F}(s)-1)&=(\tilde{F}(t)-1)G(s),\\
K(st)-K(s)&=K(t)G(s).
\end{align*}
That is, $K$ satisfies \eqref{GARplus} with side condition $K(1)=0$.
\end{enumerate}

In summary: in both cases elimination of $H$ yields
$(G_{\mathbb{A},\mathbb{R}_+})$ and the side condition of vanishing at the identity.

So far, in \eqref{GARplus} above, $t$ ranges over $\mathbb{R}_+$ whereas $s$
ranges over $\mathbb{A}_{\mathbb{N}}=\{a_n:n\in\mathbb{N}\}$, but $s$ can
be allowed to range over $\{a_{m/n}:m,n\in \mathbb{N}\}$, by the
Corollary. As before, since $a:n\mapsto a_n$ has $\tilde{a}$ as its
continuous extension to a bijection onto $\mathbb{R}_+$, and $\varphi$
is continuous, we conclude that $s$ may range over
$\mathbb{R}_+$, yielding the multiplicative form of the Goldie equation
\eqref{GFEx} with the side-condition of vanishing at the identity.
\end{proof}

\begin{rem}\label{r:onecase}
As in \cite[Section 5]{PitP}, we consider only
\emph{non-degenerate} stable distributions, consequently `Case 1' will not figure
below (as this case yields an arithmetic distribution---cf.\ \cite[XVI.1,
  Lemma 4]{Fel}, so here concentrated on $0$).
\end{rem}

\begin{rem}\label{r:case2}
In `Case 2' above,
$\tilde{H}(st)-\tilde{H}(s)=\tilde{H}(t)G(s)$,
since $G(st)=G(s)G(t)$, by Remark \ref{r:altreg}. So
$\tilde H(\re^u)=\kappa H_{\gamma}(u)=\kappa(\re^{\gamma u}-1)/\gamma$. We use
this in Section \ref{s:form}.
\end{rem}

\section{Stable laws: their form}\label{s:form}

This section demonstrates how to `telescope' several cases of the analysis in
\cite{PitP} into one, and to make l'Hospital's Rule carry the burden of the
`limiting' case $\alpha =1$. At little cost, we also deduce the form of the
location constants $b_n$, without needing the separate analysis conducted in
\cite[Section 5.2]{PitP}.

We break up the material into steps, beginning with a statement of the
result.

\subsection{Form of the law}\label{ss:form}

The \emph{form} of $\varphi $ for a non-degenerate stable distribution is an
immediate corollary of Theorem GFE (Section \ref{s:intro}) applied to
\eqref{dagdag} above. For some $\gamma\in\mathbb{R}$, $\kappa\in\mathbb{C}$
and with $A:=\kappa/\gamma$ and $B:=1-A$,
\begin{equation}\label{ddag}
f(t)=\log\varphi(t)=
\begin{cases}
f(1)(At^{\gamma+1}+Bt),&\text{for $\gamma\neq 0$},\\
f(1)(t+\kappa t\log t),&\text{with $\gamma=0$},
\end{cases}
\qquad(t>0).\tag{\ddag}
\end{equation}
Here $\alpha:=\gamma+1$ is the \emph{characteristic exponent}. From this
follows a formula for $t<0$ (by complex conjugation---see below). The
connection with \cite[Section 5 at end]{PitP} is given by:

\begin{enumerate}[(i)]
\item $f(1):=\log\varphi(1)=-c+\ri y$ (with $c>0$, as
$\abs{\varphi(t)}<1$ for some $t>0$);

\item $f(1)\kappa=-\ri\lambda$. So $f(1)B=-c+\ri(y+\lambda/\gamma)$,
and $\kappa=\lambda(-y+\ri c)/(c^2+y^2)$.
\end{enumerate}

\begin{rem}\label{r:dominant}
We note, for the sake of completeness, that
restrictions on the two parameters $\alpha$ and $\kappa$ (equivalently
$\gamma$ and $\kappa$) follow from asymptotic analysis of the `initial'
behaviour of the characteristic function $\varphi$ (i.e.\ near the origin).
This is equivalent to the `final' or tail behaviour (i.e.\ at infinity) of
the corresponding distribution function. Specifically, the `dominance ratio'
of the imaginary part of the \emph{dominant} behaviour in $f(t)$ to the
value $c$ (as in (i) above) relates to the `tail balance' ratio $\beta$ of
\cite[(6.10)]{PitP}, i.e.\ the asymptotic ratio of the distribution's tail
difference to its tail sum---cf.\ \cite[Section 8]{PitP}. Technical arguments, based
on Fourier inversion, exploit the regularly varying behaviour as
$t\downarrow 0$ (with index of variation $\alpha $---see above) in the real
and imaginary parts of $1-\varphi(t)$ to yield the not unexpected result
\cite[Theorem 6.2]{PitP} that, for $\alpha\neq1$, the dominance ratio is
proportional to the tail-balance ratio $\beta$ by a factor equal to the
ratio of the sine and cosine variants of Euler's Gamma integral\footnote{%
In view of that factor's key role, a quick and elementary derivation is
offered in the Appendix (for $0<\alpha<1$).} (on account of
the dominant power function)---compare \cite[Theorem 4.10.3]{BinGT}.
\end{rem}

\subsection{On notation}\label{ss:notation}
The parameter $\gamma:=\alpha-1$ is
linked to the auxiliary function $G$ of \eqref{GFE}; this usage of $\gamma$
conflicts with \cite{PitP}, where two letters are used in describing the behaviour of the ratio
$b_n/n$: $\lambda$ for the `case $\alpha=1$', and otherwise $\gamma$
(following Feller \cite[VI.1 Footnote 2]{Fel}). The latter we denote by
$\gamma _{\text{P}}(k)$, reflecting the $k$ value in the `case $\alpha=1/k\neq 1$'.
In Section \ref{ss:locgen} below it emerges that $\gamma_{\text{P}}(1+)=\lambda\log n$.

\subsection{Verification of the form (\ref{ddag})}\label{ss:ver}
By Remark \ref{r:onecase}, only the second case of the Proposition applies:
the function $K(t)=\tilde{F}(t)-1=f(t)/(tf(1))-1$ solves \eqref{GFEx} with
side-condition $K(1)=0$.  Writing $t=e^u$ (as in Remark \ref{r:stuv}) yields
\[
\frac{f(t)}{tf(1)}=\frac{f(\re^u)\re^{-u}}{f(1)}=1+K(\re^u)=\kappa(u)
=1+\kappa\,\frac{\re^{\gamma u}-1}\gamma,
\]
for some complex $\kappa$ and $\gamma\neq0$ (with passage to $\gamma=0$,
in the limit, to follow). So, for $t>0$, with $A:=\kappa/\gamma$ and
$B:=1-A$, as above,
\[
f(t)=\log \varphi(t)=f(1)t\Bigl(1+\kappa\,\frac{t^\gamma-1}\gamma\Bigr)
=f(1)(At^\alpha+Bt),
\]
with $\alpha=\gamma+1$. On the domain $t>0$, this agrees with \cite[(5.5)]{PitP};
for $t<0$ the appropriate formula is immediate via complex conjugation,
verbatim as in the derivation of \cite[(5.5)]{PitP}, save for the $\gamma$
usage. To cover the case $\gamma=0$, apply the l'Hospital convention; as in
\cite[(5.8)]{PitP}, for $t>0$ and $u>0$ and some $\kappa\in\mathbb{C}$,
\[
\kappa (t):=\frac{f(e^t)e^{-t}}{f(1)}=1+\kappa t:\qquad
f(u)=f(1)(u+\kappa u\log u).
\]

\subsection{Location parameters: general case $\alpha\neq1$}\label{ss:locgen}
Here $\gamma =\alpha -1\neq 0$. From the proof of the Proposition,
$G(t):=g^{-1}(\re^t)\re^{-t}$, so $g^{-1}(\re^t)=\re^t\re^{\gamma t}=\re^{\alpha t}$.
Put $k=1/\alpha$; then
\[
v=g^{-1}(u)=u^\alpha:\qquad u=g(v)=v^{1/\alpha}=v^k,
\]
confirming $a_n=g(n)=n^k$, as in \cite[Lemma 5.3]{PitP}. (Here $k>0$, as
\emph{strict} monotonicity was assumed in the Proposition). Furthermore,
as in Remark \ref{r:case2},
$$\kappa\,\frac{\re^{\gamma t}-1}\gamma=\tilde H(\re^t)
=\frac{h(g^{-1}(\re^t))\re^{-t}}{f(1)};$$
so
$$
h(g^{-1}(e^t))=f(1)\kappa\,\frac{\re^{\alpha t}-\re^t}\gamma:\qquad
h(u)=f(1)\kappa\,\frac{u-u^{1/\alpha}}\gamma
=f(1)\kappa\,\frac{u-u^k}\gamma,
$$
where $\gamma=\alpha-1=(1-k)/k$. So
$$b_n=\ri h(n)=\ri f(1)\kappa\,\frac{n-n^k}\gamma,$$
as in the Pitman analysis: see
\cite[Section 5.1]{PitP}. Here $b_n$ is real, since $f(1)\kappa=-\ri\lambda$,
according to (ii) in Section \ref{ss:form} above and conforming with
\cite[Section 5.1]{PitP}. So as $b_n/n=\gamma _{\text{P}}(k)$, similarly to
\cite[end of proof of Lemma 4.1]{PitP}, again as $f(1)\kappa=-\ri\lambda$, for any $n\in \mathbb{N}$
\[
\lim_{k\rightarrow1}\gamma_{\text{P}}(k)=\ri f(1)\kappa\,
\lim_{k\rightarrow1}k\frac{1-n^{k-1}}{k-1}=\lambda\log n.
\]

\subsection{Location parameters: special case $\alpha=1$}\label{ss:special}
Here $\gamma =0$. In Section \ref{ss:ver} above the form of $g$ specializes to
\[
g^{-1}(\re^t)=\re^t:\qquad g(u)=u.
\]
Applying the l'Hospital convention yields the form of $h$: for $t>0$ and
$u>0$,
\[
h(g^{-1}(\re^t))=h(\re^t)=f(1)\kappa t\re^t:\qquad
h(u)=f(1)\kappa u\log u;
\]
so, as in \cite[(5.8)]{PitP}, $b_n=\lambda n\log n$ (since
$b_n=\ri h(n)$ and again $\lambda=\ri f(1)\kappa$).

\section{Identifying $a_{n}$ from the continuity of $\protect\varphi $}\label{s:sequenceidentification}

In \S 3 the form of the continuous solutions $\varphi $ of $(ChFE)$ was
derived from the known continuous solutions of the Goldie equation $(GFE)$
on the assumption that $a_{n}=n^{k}$, for some  $k\neq 0$ (as then $%
\{a_{m}/a_{n}:m,n\in \mathbb{N}\}$ is dense in $\mathbb{R}_{+})$. Here we
show that the side condition on $a_{n}$ may itself be deduced from $(ChFE)
$ provided the solution $\varphi $ is continuous and \textit{non-trivial,}
i.e. neither $|\varphi |\equiv 0$ nor $|\varphi |\equiv 1$
holds, so obviating the assumption that $\varphi $ is the characteristic
function of a (non-degenerate) distribution.

\bigskip
{%
\makeatletter
\def\th@plain{\normalfont\itshape
  \def\@begintheorem##1##2{%
        \item[\hskip\labelsep \theorem@headerfont ##1{\bf .}] }%
}%
\begin{theorem}If $\varphi $ is a non-trivial
continuous function and satisfies $(ChFE)$ for some sequence %
$a_{n}\geq 0$, then $a_{n}=n^{k}$ for some $k\neq 0.$
\end{theorem}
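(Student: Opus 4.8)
The plan is to work throughout with the modulus $\psi:=\abs{\varphi}$, which is continuous, non-negative, and, on taking absolute values in \eqref{ChFE}, satisfies $\psi(a_nt)=\psi(t)^n$ for all $n\in\mathbb{N}$ and $t>0$. Non-triviality means $\psi\not\equiv0$ and $\psi\not\equiv1$; a constant solution would force $c^n=c$ and hence $c\in\{0,1\}$, so $\psi$ is non-constant and, by the intermediate value theorem, takes a value $0<\psi(t_0)=:c\neq1$. From this I would extract three preliminary facts: no $a_N$ can vanish (else $\psi(t)^N=\psi(0)$ is constant in $t$, forcing triviality), so all $a_n>0$; the values $\psi(a_nt_0)=c^n$ are distinct, so the $a_n$ are pairwise distinct; and $a_2\neq1$ (else $\psi^2=\psi$ gives $\psi\in\{0,1\}$). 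Writing $\varphi(t)^{mn}=(\varphi(t)^m)^n$ in two ways and taking moduli yields the crucial identity
\[
\psi(a_ma_nt)=\psi(a_{mn}t)\qquad(m,n\in\mathbb{N},\ t>0).
\]

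This identity forces a dichotomy that I regard as the main obstacle: either $a_{mn}=a_ma_n$ for all $m,n$, or for some pair the ratio $r:=a_ma_n/a_{mn}\neq1$, whence $\psi(rw)=\psi(w)$ for all $w$, so $\psi$ is invariant under a non-trivial scaling. I must show the second alternative is impossible. On the logarithmic scale $\Theta(u):=\psi(\re^u)$ is then periodic, with period $\rho_0:=\abs{\log r}$, and satisfies $\Theta(u+\alpha_n)=\Theta(u)^n$ where $\alpha_n:=\log a_n$. Periodicity makes $\Theta$ bounded, and $\Theta(u)^{2^j}=\Theta(u+j\alpha_2)$ then forces $\Theta\leq1$; evaluating at a minimiser gives $m^2=m$, so $\min\Theta\in\{0,1\}$. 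Viewing $\Theta$ on the circle $\mathbb{R}/\rho_0\mathbb{Z}$, translation by $\alpha_2$ is a rotation: if it is rational of order $q$ then $\Theta=\Theta^{2^q}$ everywhere, forcing $\Theta\in\{0,1\}$; if irrational, then $\min\Theta=0$ (the alternative being $\Theta\equiv1$), and the forward orbit of a minimiser consists of zeros and is dense, forcing $\Theta\equiv0$. Either way $\psi$ is trivial, so this alternative is excluded.

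Hence $a$ is completely multiplicative, and next I would show the subgroup $\{a_{m/n}:m,n\in\mathbb{N}\}$ of $(\mathbb{R}_+,\times)$ is dense. Such a subgroup is dense or cyclic; were it cyclic, $a_n=g^{k_n}$ with $k_{mn}=k_m+k_n$, i.e.\ $k\colon\mathbb{N}\to\mathbb{Z}$ is completely additive and, by distinctness, injective --- impossible by an elementary check (e.g.\ $2^{k_3}$ and $3^{k_2}$ carry equal $k$-value when $k_2,k_3>0$, the remaining sign cases being similar). Given density, the zero set of $\psi$ is invariant under the whole group (from $\psi(\tau)=0$ one gets $\psi(\tau/a_n)=\psi(a_m\tau)=0$), so it is empty or dense; denseness would give $\psi\equiv0$, so $\psi$ has no zeros. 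I may then set $Q(u):=\log\psi(\re^u)$, a real continuous function with $Q(u+\alpha_n)=nQ(u)$, and extend it (taking $n$th roots of $\psi>0$) to $Q(u+\alpha_q)=qQ(u)$ for all rational $q>0$, with $\alpha$ additive over multiplication and $\{\alpha_q\}$ dense.

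It remains to upgrade multiplicativity to the power law. Fixing $u_0$ with $Q(u_0)\neq0$, the relation $Q(u_0+\alpha_n)=nQ(u_0)$ shows $Q$ is unbounded, hence not periodic. Were $Q$ non-injective, say $Q(u)=Q(u')$ with $u\neq u'$, then $Q(w)=Q(w+(u'-u))$ would hold on the dense set $\{u+\alpha_q\}$, hence everywhere by continuity, making $Q$ periodic --- a contradiction. Thus $Q$ is continuous and injective, so strictly monotone, and since $q\mapsto qQ(u_0)=Q(u_0+\alpha_q)$ is strictly monotone in $q$, so is $q\mapsto\alpha_q$. A monotone additive map on the dense subgroup $\log\mathbb{Q}_+$ is linear, giving $\alpha_q=k\log q$, that is $a_q=q^k$ and in particular $a_n=n^k$; finally $k\neq0$, since $k=0$ gives $a_n\equiv1$, whence $Q\equiv0$ and $\psi\equiv1$, contradicting non-triviality. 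The difficulty is concentrated in the second paragraph, where the elimination of the scaling-invariant (log-periodic) branch carries the tension between the `group' and the `asymptotic' content of the equation.
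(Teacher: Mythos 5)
Your proposal is correct, and it reaches the paper's pivotal identity $\abs{\varphi(a_{mn}t)}=\abs{\varphi(t)}^{mn}=\abs{\varphi(a_ma_nt)}$ by the same route, but from there the two arguments genuinely diverge. The paper's Proposition 2 kills a non-trivial scale-invariance $\abs{\varphi(ct)}\equiv\abs{\varphi(t)}$ by first proving (Lemma 2) that $a_n$ tends to $0$ or to $\infty$, deducing $\abs{\varphi(0)}\in\{0,1\}$ accordingly, and then letting $c^nt$ run to the origin; you instead pass to $\Theta(u)=\abs{\varphi(\re^u)}$ on the circle $\mathbb{R}/\rho_0\mathbb{Z}$ and run a rational/irrational rotation-orbit argument with the relation $\Theta(u+j\alpha_2)=\Theta(u)^{2^j}$. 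Your version never touches the point $t=0$ nor the limiting behaviour of $a_n$, at the price of a case split on the rotation number; the paper's is shorter but leans on continuity of $\varphi$ at the origin. The second divergence is more substantial: having obtained $a_{mn}=a_ma_n$, the paper simply cites \cite[Lemma 5.4]{PitP} for $a_n=n^k$, whereas you derive the power form from scratch — density of the subgroup $\{a_m/a_n\}$ (via the cyclic/dense dichotomy and the impossibility of an injective completely additive $k\colon\mathbb{N}\to\mathbb{Z}$), absence of zeros of $\abs{\varphi}$, and strict monotonicity of $Q(u)=\log\abs{\varphi(\re^u)}$ forcing $q\mapsto\alpha_q$ to be a monotone additive, hence linear, map. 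This extra work buys genuine self-containment: complete multiplicativity alone does not pin down $n^k$ (the values at primes are otherwise free), so some further input from the continuity of $\varphi$ is needed at exactly the point where you supply it. Only the "remaining sign cases" in the cyclic-subgroup exclusion are left to the reader (the mixed-sign case needs a slightly different pair, e.g.\ $2^{-k_3}3^{k_2}$ against $1$), but that is a routine check.
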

}
\bigskip

We will first need to establish a further lemma and proposition.

\bigskip

\begin{lemma}If $(ChFE)$\ is satisfied by a\
non-trivial continuous function $\varphi$, then the sequence %
$a_{n}$ is either convergent to $0,$ or divergent
(`convergent to $+\infty $').
\end{lemma}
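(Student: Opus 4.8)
The plan is to pass to the modulus $\rho:=\abs{\varphi}$, since taking absolute values in \eqref{ChFE} annihilates the location term $\exp(\ri b_n t)$ and leaves the clean relation
\[
\rho(a_n t)=\rho(t)^n\qquad(n\in\mathbb N,\ t\in\mathbb R_+),
\]
with $\rho\ge0$ continuous and, by non-triviality, $\rho\not\equiv0$ and $\rho\not\equiv1$. First I would clear away degeneracies: if some $a_n=0$ the right-hand side is constant in $t$, forcing $\rho$ constant and hence (from $\rho=\rho^n$) $\rho\equiv0$ or $\rho\equiv1$, which is excluded; so every $a_n>0$. Taking $\sup_t$ and $\inf_t$ across the identity (legitimate since $t\mapsto a_nt$ is a bijection of $\mathbb R_+$) gives $M:=\sup\rho$ and $m:=\inf\rho$ with $M=M^n$ and $m=m^n$, whence $M\in\{1,\infty\}$ and $m\in\{0,1\}$; where convenient I would pass to $1/\rho$ to reduce to the bounded normalisation $\rho\le1$. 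It then suffices to prove two things: (i) $\{a_n\}$ has no \emph{finite positive} subsequential limit, and (ii) $0$ and $+\infty$ are not \emph{both} subsequential limits. Granted (i) and (ii), every subsequential limit in $[0,\infty]$ lies in $\{0,\infty\}$ and these cannot coexist, so $a_n\to0$ or $a_n\to\infty$.

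For (i), suppose $a_{n_j}\to L\in(0,\infty)$ with $n_j\to\infty$. Fixing $t>0$ and letting $j\to\infty$, continuity gives $\rho(t)^{n_j}=\rho(a_{n_j}t)\to\rho(Lt)<\infty$; since $n_j\to\infty$ this is possible only if $\rho(t)\le1$, and then the limit is $1$ when $\rho(t)=1$ and $0$ otherwise. Thus $t\mapsto\rho(Lt)$ is a continuous function on the connected set $\mathbb R_+$ with values in the two-point set $\{0,1\}$, hence constant; as $Lt$ ranges over all of $\mathbb R_+$ this forces $\rho\equiv0$ or $\rho\equiv1$, the required contradiction.

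The crux — and the step I expect to be the main obstacle — is (ii), because continuity pins down $\rho$ near $0$ but a priori leaves its behaviour at $\infty$ unconstrained, so the two ends must be tied together. Here I would exploit the composition identity $\rho(a_ma_nt)=\rho(t)^{mn}=\rho(a_{mn}t)$, which shows that $a_ma_n/a_{mn}$ lies in the dilation stabiliser $H:=\{h>0:\rho(h\,\cdot)\equiv\rho\}$, a subgroup of $(\mathbb R_+,\times)$. Since $\rho$ is continuous and non-constant, $H$ cannot be dense (a dense scaling-orbit would force $\rho$ constant), so $H=\{1\}$ or $H$ is infinite cyclic. When $H=\{1\}$ the map $n\mapsto a_n$ is completely multiplicative; an oscillation $\liminf a_n=0$, $\limsup a_n=\infty$ then forces primes $p,q$ with $a_p<1<a_q$ (otherwise all $a_n\ge1$ or all $a_n\le1$), and choosing exponents so that $a_p^{A}a_q^{B}$ equals $1$ (rational log-ratio) or approaches $1$ along a dense set of points carrying a divergent power $p^Aq^B$ (irrational case) yields either $\rho(t)=\rho(t)^N$ for some $N\ge2$, or $\rho\equiv0$ by a continuity/density argument — both contradictions. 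The infinite-cyclic case I would reduce to this one by passing to the quotient $\mathbb R_+/H$: here $H=c^{\mathbb Z}$ makes $\rho$ log-periodic, hence bounded, so that $M=1$ is attained, and one re-runs the multiplicative analysis on the residues. Pinning down this last case cleanly, especially when $\rho$ has zeros, is where I expect the genuine work to lie.
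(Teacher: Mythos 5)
Your step (i) reproduces the paper's proof exactly: a subsequence $a_{n_j}\to L\in(0,\infty)$ makes $t\mapsto\rho(Lt)$ a continuous $\{0,1\}$-valued function on the connected set $\mathbb{R}_+$, forcing $\rho\equiv0$ or $\rho\equiv1$. You are also right that (i) alone does not prove the lemma: the negation of ``$a_n\to0$ or $a_n\to\infty$'' also allows the subsequential limit set to be exactly $\{0,\infty\}$, a case the paper's own opening move (``then for some $\mathbb{M}$ and $a>0$, $a_m\to a$ through $\mathbb{M}$'') passes over. So your obligation (ii) is genuine --- and it is precisely there that your proposal is incomplete. The stabiliser-subgroup and prime-factorisation analysis is abandoned before the end by your own admission (the infinite-cyclic case, especially with zeros of $\rho$, is not pinned down), and even the completely multiplicative branch rests on density assertions you only gesture at. As it stands, the lemma is not established.

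The missing idea is that (ii) needs no multiplicative structure at all; it falls to the same one-line manoeuvre as (i), applied once at each end of the scale. If $a_{n_j}\to0$, then for every $t>0$ continuity at the origin gives $\rho(t)^{n_j}=\rho(a_{n_j}t)\to\rho(0)$; as in (i) this forces $\rho(t)\le1$, and the limit is $1$ or $0$ according as $\rho(t)=1$ or $\rho(t)<1$. Since that limit is the single number $\rho(0)$, independent of $t$, either $\rho\equiv1$ (excluded) or $\rho(0)=0$ with $\rho<1$ throughout $\mathbb{R}_+$. If in addition $a_{m_j}\to\infty$, substitute $t=s/a_{m_j}$ in \eqref{ChFE} to get $\rho(s)=\rho(s/a_{m_j})^{m_j}$; now $s/a_{m_j}\to0$ gives $\rho(s/a_{m_j})\to\rho(0)=0$, so the right-hand side tends to $0$ and $\rho\equiv0$, again trivial. (This is exactly the $t\mapsto s/a_n$ substitution the paper deploys in the $a_n\to\infty$ branch of its Proposition~2.) Incidentally, the case you flag as the hardest, $H=c^{\mathbb{Z}}$, is in fact the easiest: replacing each $a_n$ by its representative in $[1,c)$ preserves the relation $\rho(a_nt)=\rho(t)^n$ and confines the scaling factors to a compact subset of $(0,\infty)$, so step (i) applies verbatim. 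With the short argument above in place of your section (ii), the proof closes; without it, there is a genuine gap.
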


\begin{proof}Suppose otherwise. Then for some $\mathbb{M}%
\subseteq \mathbb{N}$, and $a>0,$%
\[
a_{m}\rightarrow a\text{ through }\mathbb{M}.
\]%
W.l.o.g. $\mathbb{M}=\mathbb{N}$, otherwise interpret $m$ below as
restricted to $\mathbb{M}$. For any $t,$ $a_{m}t\rightarrow at,$ so $%
K_{t}:=\sup_{m}\{|\varphi (a_{m}t)|\}$ is finite. Then for all $m$%
\[
|\varphi (t)|^{m}=|\varphi (a_{m}t)|\leq K_{t},
\]%
and so $|\varphi (t)|\leq 1,$ for all $t.$ By continuity,
\[
|\varphi (at)|=\lim_{m}|\varphi (a_{m}t)|=\lim_{m}|\varphi (t)|^{m}=0\text{
or }1.
\]%
Then, setting $N_{k}:=\{t:|\varphi (at)|=k\},$%
\[
\mathbb{R}_{+}=N_{0}\cup N_{1}.
\]%
By the connectedness of $\mathbb{R}_{+}$, one of $N_{0},N_{1}$ is empty, as
the sets $N_{k}$ are closed; so respectively $|\varphi |\equiv 0$ or $%
|\varphi |\equiv 1,$ contradicting non-triviality. 
\end{proof}
\bigskip

The next result essentially contains \cite[Lemma 5.2]{PitP}, which relies on $%
|\varphi (0)|=1,$ the continuity of $\varphi ,$ and the existence of some $t$
with $\varphi (t)<1$ (guaranteed below by the non-triviality of $\varphi ).$
We assume less here, and so must also consider the possibility that $%
|\varphi (0)|=0.$

\bigskip

\begin{proposition}
\textit{If }$(ChFE)$\ \textit{is satisfied
by a non-trivial continuous function }$\varphi $\textit{\ and for some }%
$c>0,$\textit{\ }$|\varphi (t)|=$\textit{\ }$|\varphi (ct)|$\textit{\ for
all }$t>0,$\textit{\ then }$c=1$\textit{.}
\end{proposition}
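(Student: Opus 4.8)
The plan is to argue by contradiction: assume $c\neq1$ and, since the hypothesis is symmetric under $c\mapsto 1/c$ (replace $t$ by $t/c$ in $\abs{\varphi(t)}=\abs{\varphi(ct)}$), take $c>1$ without loss. Writing $\psi:=\abs{\varphi}$, the scaling identity $\psi(ct)=\psi(t)$ says exactly that in logarithmic coordinates the function $\Psi(u):=\psi(\re^{u})$ is periodic with period $P:=\log c>0$; being continuous and periodic, $\Psi$ is bounded. Taking moduli in \eqref{ChFE} gives $\psi(t)^{n}=\psi(a_{n}t)$, and boundedness of $\psi$ then forces $\psi\le1$ everywhere (a value $\psi(t_{0})>1$ would make $\psi(t_{0})^{n}$ unbounded). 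The same relation forces every $a_{n}>0$: if $a_{N}=0$ then $\psi(t)^{N}=\psi(0)$ is constant in $t$, so $\psi$ is constant, and then $\psi^{n}=\psi$ forces the value $0$ or $1$, against non-triviality.

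The crux is to exploit the periodicity to tame the shifts coming from \eqref{ChFE}. In logarithmic coordinates the modulus relation reads $\Psi(u+\alpha_{n})=\Psi(u)^{n}$ with $\alpha_{n}:=\log a_{n}$, and $P$-periodicity lets me replace $\alpha_{n}$ by its residue $r_{n}:=\alpha_{n}\bmod P\in[0,P)$, so that
\[
\Psi(u+r_{n})=\Psi(u)^{n}\qquad(u\in\mathbb{R},\ n\in\mathbb{N}).
\]
The point of this reduction is that the shifts $r_{n}$ now lie in the compact interval $[0,P]$, so I may pass to a subsequence $r_{n_{j}}\to r^{*}\in[0,P]$ with $n_{j}\to\infty$.

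Finally I would combine continuity with the pointwise behaviour of $n$-th powers. For each fixed $u$, continuity of $\Psi$ gives $\Psi(u+r_{n_{j}})\to\Psi(u+r^{*})$, whereas $\Psi(u)\in[0,1]$ gives $\Psi(u)^{n_{j}}\to1$ when $\Psi(u)=1$ and $\Psi(u)^{n_{j}}\to0$ otherwise. Hence $\Psi(u+r^{*})\in\{0,1\}$ for every $u$: the continuous function $u\mapsto\Psi(u+r^{*})$ takes values in the two-point set $\{0,1\}$, so by connectedness of $\mathbb{R}$ it is constant, whence $\Psi\equiv0$ or $\Psi\equiv1$, i.e.\ $\abs{\varphi}\equiv0$ or $\abs{\varphi}\equiv1$, contradicting non-triviality; thus $c=1$. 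The main obstacle is exactly the reconciliation in this last step: the pointwise limit of $\Psi(\cdot)^{n_{j}}$ looks like a discontinuous indicator, yet the left-hand side is a genuine continuous translate of $\Psi$, and the residue reduction of the previous paragraph is what makes these two facts collide. I note that this route does not seem to need the preceding Lemma on the dichotomy $a_{n}\to0$ or $a_{n}\to\infty$, since reducing modulo $P$ controls the shifts regardless of how $a_{n}$ behaves.
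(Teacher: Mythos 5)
Your argument is correct, but it takes a genuinely different route from the paper's. The paper first proves a separate dichotomy lemma (for a non-trivial continuous solution, $a_n\to 0$ or $a_n\to\infty$), uses it to pin down $|\varphi(0)|\in\{0,1\}$ in the two cases, and then, taking $c<1$ without loss of generality, lets $c^nt\to 0$ in the invariance $|\varphi(c^nt)|=|\varphi(t)|$ to conclude by continuity at the origin that $|\varphi|\equiv|\varphi(0)|$, contradicting non-triviality. You instead convert the invariance into $P$-periodicity of $\Psi(u)=|\varphi(e^u)|$ with $P=\log c$, reduce the shifts $\log a_n$ modulo $P$ into the compact interval $[0,P]$, extract a convergent subsequence of residues, and play the continuous translate $\Psi(\cdot+r^{*})$ against the $\{0,1\}$-valued pointwise limit of $\Psi(\cdot)^{n_j}$, finishing by connectedness --- which is exactly the mechanism of the paper's Lemma 2, here absorbed into the Proposition itself. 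Your observation that this bypasses the dichotomy lemma is right, and has a structural consequence: Lemma 2 is invoked only inside the proof of Proposition 2, so on your route it would become redundant for the Theorem. What the paper's route buys in exchange is the explicit identification of $|\varphi(0)|$ (connecting to \cite[Lemma 5.2]{PitP}, as the surrounding discussion notes), information your compactness argument deliberately avoids; it also needs no subsequence extraction. One presentational point only: you should dispose of the possibility $a_N=0$ for some $N$ (as you do, correctly, via constancy of $\psi$) \emph{before} deducing $\psi\le 1$ from $\psi(t_0)^n=\psi(a_nt_0)\le\sup\psi$ for all $n$, since that deduction already uses $a_nt_0>0$; as written the two steps appear in the wrong order, though nothing substantive is lost.
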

\bigskip

\begin{proof} Note first that $a_{n}>0$ for all $n;$ indeed,
otherwise, for some $k\geq 1$%
\[
|\varphi (t)|^{k}=|\varphi (0)|\qquad (t\geq 0).
\]%
Assume first that $k>1;$ taking $t=0$ yields $|\varphi (0)|=0$ or $1,$ which
as in Lemma 2 implies $|\varphi |\equiv 0$ or $|\varphi |\equiv 1.$ If $k=1$
then $|\varphi (t)|=|\varphi (0)|$ and for all $n>1,$ $|\varphi
(0)|^{n}=|\varphi (0)|,$ so that again $|\varphi (0)|=0$ or $1,$ which again
implies $|\varphi |\equiv 0$ or $|\varphi |\equiv 1.$

Applying Lemma 2, the sequence $a_{n}$ converges either to $0$ or to $%
\infty .$

First suppose that $a_{n}\rightarrow 0.$ Then, as above (referring again to $%
K_{t}$), we obtain $|\varphi (t)|\leq 1$ for all $t.$ Now, since%
\[
|\varphi (0)|=\lim |\varphi (a_{n}t)|=\lim_{n}|\varphi (t)|^{n},
\]%
if $|\varphi (t)|=1$ for \textit{some} $t,$ then $|\varphi (0)|=1$, and that
in turn yields, for the very same reason, that $|\varphi (t)|\equiv 1$ for
\textit{all} $t,$ a trivial solution, which is ruled out. So in fact $%
|\varphi (t)|<1$ for all $t,$ and so $|\varphi (0)|=0.$ Now suppose that for
some $c>0,$ $|\varphi (t)|=$ $|\varphi (ct)|$ for all $t>0.$ We show that $%
c=1.$ If not, w.l.o.g. $c<1,$ (otherwise replace $c$ by $c^{-1}$ and note
that $|\varphi (t/c)|=$ $|\varphi (ct/c)|=|\varphi (t)|$ ); then%
\[
0=|\varphi (0)|=\lim_{n}|\varphi (c^{n}t)|=|\varphi (t)|,\text{ for }t>0,
\]%
and so $\varphi $ is trivial, a contradiction. So indeed $c=1$ in this case.

Now suppose that $a_{n}\rightarrow \infty .$ As $\varphi$ is non-trivial, choose $s$ with $\varphi
(s)\neq 0,$ then
\[
|\varphi (0)|=\lim_{n}|\varphi (s/a_{n})|=\lim_{n}\exp \left( \frac{1}{n}%
\log |\varphi (s)|\right) =1,
\]%
i.e. $|\varphi (0)|=1.$ Again suppose that for some $c>0,$ $%
|\varphi (t)|=$ $|\varphi (ct)|$ for all $t>0.$ To show that $c=1,$ suppose
again w.l.o.g. that $c<1;$ then%
\[
1=|\varphi (0)|=\lim_{n}|\varphi (c^{n}t)|=|\varphi (t)|\text{ for }t>0,
\]%
and so $|\varphi (t)|\equiv 1,$ again a trivial solution. So again $c=1.$%
\end{proof}
\bigskip

\textit{Proof of the Theorem. } $(ChFE)$ implies that%
\[
|\varphi (a_{mn}t)|=|\varphi (t)|^{mn}=|\varphi (a_{m}t)|^{n}=|\varphi
(a_{m}a_{n}t)|.
\]%
By Proposition 2, $a_n$ satisfies the discrete version of the Cauchy exponential equation $(CEE)$
\[
a_{mn}=a_{m}a_{n}\qquad (m,n\in \mathbb{N}),
\]%
whose solution is known to take the form $n^{k}$ (cf. \cite[Lemma 5.4]{PitP}),
since $a_{n}>0$ (as in Prop. 2). If $a_{n}=1$ for some $n>1,$ then, for each
$t>0,$ $|\varphi (t)|=0$ or 1  (as $|\varphi (t)|=|\varphi (t)|^{n}$) and so
again, by continuity as in Lemma 2, $\varphi $ is trivial. So $k\neq 0.$ $%
\square $

\begin{rem}
Continuity is essential to the theorem: take $a_n\equiv 1$, then a Borel function $\varphi$ may take the values 0 and 1 arbitrarily.
\end{rem}

\section{Complements}\label{s:complements}

\subsection{Self-neglecting and self-equivarying functions}\label{ss:SNSE}

Recall (cf.\ \cite[Section 2.11]{BinGT}) that a self-map $\varphi$ of
$\mathbb{R}_+$ is \emph{self-neglecting} ($\varphi\in{\it SN}$) if
\begin{equation}\label{SN}
\varphi(x+t\varphi(x))/\varphi(x)\rightarrow1\quad\text{locally uniformly in
$t$ for all $t\in\mathbb{R}_+$},\tag{\emph{SN}}
\end{equation}
and $\varphi(x)=\oh(x)$ as $x\rightarrow \infty$. This traditional restriction may be usefully relaxed
in two ways, as in \cite{Ost1}: firstly, in imposing the weaker order
condition
$\varphi(x)=\Oh(x)$, and secondly by replacing the limit $1$ by a general
limit function $\eta$, so that
\begin{equation}\label{SE}
\varphi(x+t\varphi(x))/\varphi(x)\rightarrow\eta(t)\quad\text{locally
uniformly in $t$ for all $t\in\mathbb{R}_+$}.\tag{\emph{SE}}
\end{equation}
A $\varphi $ satisfying \eqref{SE} is called \emph{self-equivarying} in
\cite{Ost1}, and the limit function $\eta=\eta^\varphi$ necessarily satisfies
the equation
\begin{equation}\label{BFE}
\eta(u+v\eta(u))=\eta(u)\eta(v)\qquad(u,v\in\mathbb{R}_+)
\tag{\emph{BFE}}
\end{equation}
(this is a special case of the \foreignlanguage{polish}{Go\l\aob b--Schinzel}
equation---see also e.g.\ \cite{Brz}, or \cite{BinO2}, where \eqref{BFE} is
termed the \emph{Beurling functional equation}). As $\eta\ge0$,
imposing the natural condition $\eta>0$ (on $\mathbb{R}_+$) implies that
it is continuous and of the form
$\eta(t)=1+\rho t$, for some $\rho\ge0$
(see \cite{BinO2}); the case $\rho=0$ recovers \eqref{SN}. A function
$\varphi\in{\it SE}$ has the representation
\[
\varphi(t)\sim\eta^\varphi(t)\int_1^t e(u)\sd u\quad\text{for some
continuous $e\rightarrow0$}
\]
(where $f\sim g$ if $f(x)/g(x)\rightarrow1$, as $x\rightarrow\infty$), and
the second factor is in ${\it SN}$ (see \cite[Theorem 9]{BinO1}, \cite{Ost1}).

\subsection{Theorem GFE}\label{ss:ThmGFE}
This theorem has antecedents in \cite{Acz} and \cite{Chu}, \cite[Theorem
  1]{Ost2}, and is generalized in \cite[Theorem 3]{BinO2}. It is also studied
in \cite{BinO3} and \cite{Ost2}.

\subsection{Homomorphisms and random walks}\label{ss:Homo}
In the context of a ring, the `\foreignlanguage{polish}{Go\l\aob b--Schinzel}
functions' $\eta_\rho(t)=1+\rho t$, as above, were used by Popa and Javor (see
\cite{Ost2} for references) to define associated (generalized) \emph{circle
  operations}:
\[
a\circ_\rho b=a+\eta_\rho(a)b=a+(1+\rho a)b=a+b+\rho ab.
\]
(Note that $a\circ_1b=a+b+ab$ is the familiar circle operation, and
$a\circ_0b=a+b$.) These were studied in the context of $\mathbb{R}$ in
\cite[Section 3.1]{Ost2}; it is straightforward to lift that analysis to the
present context of the ring $\mathbb{C}$, yielding the \emph{complex circle
  groups}
\[
\mathbb{C}_\rho:=\{x\in\mathbb{C}:1+\rho x\ne0\}
=\mathbb{C}\backslash\{\rho^{-1}\}\qquad (\rho\ne0).
\]
Since
\begin{align*}
(1+\rho a)(1+\rho b)&=1+\rho a+\rho b+\rho^2ab
  =1+\rho\lbrack a+b+\rho ab],\\
\eta_\rho(a)\eta_\rho(a)&=\eta_\rho(a\circ_\rho b),
\end{align*}
$\eta_\rho:(\mathbb{C}_\rho,\circ_\rho)\rightarrow
(\mathbb{C}^*,\cdot)=(\mathbb{C}\backslash\{0\},\times)$ is an isomorphism
(`from $\mathbb{C}_\rho$ to $\mathbb{C}_\infty$').

We may recast \eqref{GFEx} along the lines of \eqref{dag} so that
$G(s)=s^\gamma$ with $\gamma\ne0$, and
$K(t)=(t^\gamma-1)\rho^{-1}$, for
$$\rho=\frac\gamma\kappa=\frac{1-k}{k\kappa}.$$
Then, as $\eta_\rho(x)=1+\rho x=G(K^{-1}(x))$,
\[
K(st)=K(s)\circ_\rho K(t)=K(s)+\eta_\rho(K(s))K(t)=K(s)+G(s)K(t).
\]
For $\gamma\ne0$, $K$ is a homomorphism from the multiplicative reals
$\mathbb{R}_+$ into $\mathbb{C}_\rho$; more precisely, it is an
isomorphism between $\mathbb{R}_+$ and the conjugate subgroup
$(\mathbb{R}_+-1)\rho^{-1}$.
In the case $\gamma=0$ ($k=1$), $\mathbb{C}_0=\mathbb{C}$ is the additive
group of complex numbers; from \eqref{GFEx} it is immediate that $K$
maps logarithmically into $(\mathbb{R},+),$ `the additive reals'.

\acks The final form of this manuscript owes much to the referee's supportively
penetrating reading of an earlier draft, and to the editors' advice and good
offices, for which sincere thanks.

\bigskip

\section*{Appendix: a ratio formula}

We give an elementary derivation (using Riemann integrals) of the formula%
\[
\int_{0}^{\infty }\frac{\cos x}{x^{k}}e^{-\delta x}\,\mathrm{d}x\left/
\int_{0}^{\infty }\frac{\sin x}{x^{k}}e^{-\delta x}\,\mathrm{d}x\right.
=\tan \pi k/2\qquad (0<k<1).
\]%

Substitution for $\delta >0$ of $s=\delta +i$ $=re^{i\theta }$, with $%
r^{2}=1+\delta ^{2}$ and $\theta =\theta _{\delta }=\tan ^{-1}(1/\delta )$,
in the Gamma integral:%
\[
\frac{\Gamma (1-k)}{s^{1-k}}=\int_{0}^{\infty }\frac{e^{-sx}}{x^{k}}\,%
\mathrm{d}x,
\]%
with $0<k<1$, gives
\[
\int_{0}^{\infty }\frac{\cos x-i\sin x}{x^{k}}e^{-\delta x}\,\mathrm{d}x=%
\frac{\Gamma (1-k)}{(1+\delta ^{2})^{(1-k)/2}}[\cos (1-k)\theta _{\delta
}-i\sin (1-k)\theta _{\delta }]\qquad (\delta >0).
\]%
This yields in the limit as $\delta \downarrow 0,$ since $\theta _{\delta
}\rightarrow \pi /2,$ the ratio of the real and imaginary parts of the
left-hand side for $\delta =0$ to be%
\[
\cot (1-k)\pi /2=\tan \pi k/2.
\]%
Passage to the limit $\delta \downarrow 0$ on the left is validated, for any $k>0$,
by an appeal to Abel's method: first integration by parts (twice) yields an
indefinite integral%
\[
(1+\delta ^{2})\int e^{\delta x}\sin x\,\mathrm{d}x=-e^{\delta x}\cos
x+\delta e^{\delta x}\sin x,
\]%
valid for all $\delta ,$ whence (again by parts)%
\[
\int_{1}^{T}\frac{e^{-\delta x}\sin x\,\mathrm{d}x}{x^{k}}=\frac{e^{-\delta
}(\delta \sin 1+\cos 1)}{(1+\delta ^{2})}-\frac{e^{-\delta T}(\delta \sin
T+\cos T)}{T^{k}(1+\delta ^{2})}-k\int_{1}^{T}\frac{e^{-\delta x}(\delta
\sin x+\cos x)\,\mathrm{d}x}{x^{k+1}(1+\delta ^{2})}.
\]%
Here $e^{-\delta x}$ is uniformly bounded as $\delta \downarrow 0,$ so by
joint continuity on $[0,1]$
\begin{eqnarray*}
\lim_{\delta \downarrow 0}\int_{0}^{\infty }\frac{1}{x^{k}}e^{-\delta x}\sin
x\,\mathrm{d}x &=&\lim_{\delta \downarrow 0}\int_{0}^{1}\frac{1}{x^{k}}%
e^{-\delta x}\sin x\,\mathrm{d}x+\lim_{\delta \downarrow 0}\int_{1}^{\infty }%
\frac{1}{x^{k}}e^{-\delta x}\sin x\,\mathrm{d}x \\
&=&\int_{0}^{\infty }\frac{\sin x}{x^{k}}\,\mathrm{d}x,
\end{eqnarray*}%
and likewise with $\cos $ for $\sin $.


\begin{thebibliography}{99}
\footnotesize

\bibitem{Acz}{\sc Acz\'{e}l, J.} (2005). Extension of a generalized Pexider
equation. \PAMS\ \textbf{133}, 3227--3233.

\bibitem{AczD}{\sc Acz\'{e}l, J. and Dhombres, J.} (1989). \emph{Functional
Equations in Several Variables, with Applications to Mathematics,
Information Theory and to the Natural and Social Sciences.} Encyclopedia
Math.\ Appl.\ \textbf{31}, Cambridge Univ. Press.

\bibitem{AczG}{\sc Acz\'{e}l, J. and
\foreignlanguage{polish}{Go\l\aob b}, St.} (1970). Remarks on
one-parameter subsemigroups of the affine group and their homo- and
isomorphisms. \emph{Aequationes Math.}\ \textbf{4}, 1--10.

\bibitem{Bin}{\sc Bingham, N. H.} (1972). Random walk on spheres. \ZW\
\textbf{22}, 169--192.

\bibitem{BinG}{\sc Bingham, N. H. and Goldie, C. M.} (1982). Extensions of regular
variation, I: uniformity and quantifiers. \PLMS\ \emph{(3)}
\textbf{44}, 473--496.

\bibitem{BinGT}{\sc Bingham, N. H., Goldie, C. M.  and Teugels, J. L.} (1989).
\emph{Regular Variation}, pbk.\ edn. Encyclopedia
Math.\ Appl.\ \textbf{27}, Cambridge Univ. Press.

\bibitem{BinO1}{\sc Bingham, N. H. and Ostaszewski, A. J.} (2014). Beurling slow
and regular variation. \emph{Trans. London Math. Soc.} \textbf{1}, 29--56.

\bibitem{BinO2}{\sc Bingham, N. H. and Ostaszewski, A. J.} (2015). Cauchy's
functional equation and extensions: Goldie's equation and inequality, the
\foreignlanguage{polish}{Go\l\aob b--Schinzel}
equation and Beurling's equation. \emph{Aequationes
Math.} \textbf{89}, 1293--1310.

\bibitem{BinO3}{\sc Bingham, N. H. and Ostaszewski, A. J.} (2016). Additivity,
subadditivity and linearity: automatic continuity and quantifier weakening.
\url{arXiv:1405.3948}.

\bibitem{BinO4}{\sc Bingham, N. H. and Ostaszewski, A. J.} (2016). Beurling
moving averages and approximate homomorphisms. \emph{Indag.\ Math.\ (N.S.)},
\textbf{27}, 601-633 (fuller version: \url{arXiv:1407.4093}).

\bibitem{BloH}{\sc Bloom, W. R. and Heyer, H.} (1995). \emph{Harmonic Analysis
of Probability Measures on Hypergroups}. De Gruyter Stud. Math.
\textbf{20}, Walter de Gruyter, Berlin.

\bibitem{BojK}{\sc Bojani\'{c}, R. and Karamata, J.} (1963). On a class
of functions of regular asymptotic behavior. Math. Research Center Tech.
Report {\bf436}, Madison, Wis. Reprinted in \emph{Selected Papers of
Jovan Karamata}, ed.\ V. Mari\'{c}. Zavod za ud\v{z}benike, Beograd, 2009,
pp.\ 545--569.

\bibitem{Brz}{\sc \foreignlanguage{polish}{Brzd\eob k}, J.} (2005). The
\foreignlanguage{polish}{Go\l\aob b--Schinzel} equation and
its generalizations. \emph{Aequationes Math.} \textbf{70}, 14--24.

\bibitem{Chu}{\sc Chudziak, J.} (2006). Semigroup-valued solutions of the
\foreignlanguage{polish}{Go\l\aob b--Schinzel}
type functional equation. \emph{Abh. Math. Semin. Univ.
Hamburg} \textbf{76}, 91--98.

\bibitem{Fel}{\sc Feller, W.} (1971). \emph{An Introduction to Probability
Theory and its Applications}, vol. 2, 2nd edn. John Wiley \& Sons, New York.

\bibitem{GupJTS}{\sc Gupta, A. K., Jagannathan, K., Nguyen, T. T. and
  Shanbhag, D. N.} (2006). Characterization of stable laws
via functional equations. \MN\ \textbf{279}, 571--580.

\bibitem{Kuc}{\sc Kuczma, M.} (2009). \emph{An Introduction to the Theory of
Functional Equations and Inequalities: Cauchy's Equation and Jensen's
Inequality}, 2nd edn., ed.\ A. Gil\'anyi. Birkh\"{a}user, Basel.

\bibitem{Ost1}{\sc Ostaszewski, A. J.} (2015). Beurling regular variation, Bloom
dichotomy, and the \foreignlanguage{polish}{Go\l\aob b--Schinzel}
functional equation. \emph{Aequationes Math.} \textbf{89}, 725--744.

\bibitem{Ost2}{\sc Ostaszewski, A. J.} (2016). Homomorphisms from functional
equations: the Goldie equation. \emph{Aequationes Math.} \textbf{90},
427--448 (fuller version: \url{arXiv:1407.4089}).

\bibitem{OstA}{\sc Ostaszewski, A. J.} (2016). Stable laws and Beurling
  kernels, \AAP\ {\bf 48A}.

\bibitem{Ost3}{\sc Ostaszewski, A. J.} (2016+). Homomorphisms from functional
equations in probability. In: \emph{Developments in Functional Equations
and Related Topics}, eds. J. \foreignlanguage{polish}{Brzd\eob k} \emph{et al.}
Springer, to appear.

\bibitem{PitP}{\sc Pitman, E. J. G. and Pitman, J. W.} (2016). A direct approach to
the stable distributions. \AAP\ {\bf 48A}.

\bibitem{RamL}{\sc Ramachandran, B. and Lau, K.-S.} (1991). \emph{Functional
Equations in Probability Theory}. Academic Press, Boston.

\bibitem{Ste}{\sc Stetk\ae r, H.} (2013). \emph{Functional Equations on
Groups}. World Scientific, Hackensack, NJ.

\end{thebibliography}
\end{document}